\newtheorem{thm}{Theorem}[section]
\newtheorem{theorem}[thm]{Theorem}
\newtheorem{corollary}[thm]{Corollary}
\newtheorem{question}[thm]{Question}
\newtheorem{lemma}[thm]{Lemma}
\theoremstyle{definition}
\newtheorem{example}[thm]{Example}
\newtheorem{definition}[thm]{Definition}
\newtheorem{remark}[thm]{Remark}
\DeclareMathOperator{\Ima}{Im}
\DeclareMathOperator{\Span}{span}
\title[Koopman representation of a topological full group]{On the $C^*$-algebra generated by the Koopman representation of a topological full group}
\date{}
\author{Eduardo Scarparo}
\thanks{This work was supported by CNPq, National Council for Scientific and Technological Development - Brazil.}
\address{ Departamento de Matemática, Universidade Federal de Santa Catarina,\newline
Campus Universitário Trindade, 88040-900, Florianópolis - SC, Brazil}
 \email{duduscarparo@gmail.com}
\keywords{group $C^*$-algebra, real rank zero, topological full group}
\subjclass[2010]{22D25}
\begin{document}

\begin{abstract}
Let $(X,T,\mu)$ be a Cantor minimal sytem and $[[T]]$ the associated topological full group. We analyze $C^*_\pi([[T]])$, where $\pi$ is the Koopman representation attached to the action of $[[T]]$ on $(X,\mu)$. 

Specifically, we show that $C^*_\pi([[T]])=C^*_\pi([[T]]')$ and that the kernel of the character $\tau$ on $C^*_\pi([[T]])$ coming from containment of the trivial representation is a hereditary $C^*$-subalgebra of $C(X)\rtimes\mathbb{Z}$. Consequently, $\ker\tau$ is stably isomorphic to $C(X)\rtimes\mathbb{Z}$, and $C^*_\pi([[T]]')$ is not AF.

We also prove that if $G$ is a finitely generated, elementary amenable group and $C^ *(G)$ has real rank zero, then $G$ is finite.
\end{abstract}
\maketitle
\section{Introduction}
In this work, we study the real rank zero and AF properties for certain classes of group $C^*$-algebras. The motivations are the classical equivalence between amenability of a group and nuclearity of its $C^*$-algebra, and the equivalence between local finiteness of a group and finiteness of its uniform Roe algebra worked out in \cite{MR2800923}, \cite{MR3158244} and \cite{scarparo_2017}.

For a compact metric space $X$, both $C(X)$ being AF and having real rank zero are equivalent to total disconnectedness of $X$. 

If a group $G$ is countable and locally finite, then $C^*(G)$ is clearly AF (hence it has real rank zero). It is an open problem whether there exists a non-locally finite group $G$ such that $C^*(G)$ is AF. 

In \cite[Theorem 2]{MR1164146}, Kaniuth proved that if $G$ is a nilpotent group and $C^*(G)$ has real rank zero, then $G$ is locally finite. 

In section \ref{rank}, we show that if $G$ is a finitely generated, elementary amenable group, and $C^*(G)$ has real rank zero, then $G$ is finite (Theorem \ref{tempo}). Our proof relies on the fact that infinite, finitely generated, elementary amenable groups virtually map onto $\mathbb{Z}$ \cite[Chapter I, Lemma 1]{MR1275829}.

Let $(X,T,\mu)$ be a Cantor minimal system and $\pi$ the Koopman representation associated to the action of the topological full group $[[T]]$ on $(X,\mu)$.

 Notice that $C^*([[T]])$ does not have real rank zero, since  $[[T]]$ maps onto $\mathbb{Z}$ (by \cite[Theorem 1.1(i)]{MR978619}, or \cite[Proposition 5.5]{MR1710743}). On the other hand, by results of Matui, the commutator subgroup $[[T]]'$ is simple (\cite{MR2205435}) and non-locally finite (this follows from much sharper results from \cite{MR3103094}). Hence, commutators of topological full groups form a class which is not covered by Theorem \ref{tempo}.

Futhermore, it was proven by Juschenko and Monod (\cite{MR3071509}) that $[[T]]$ is amenable. In Section \ref{main}, we prove that $C^*([[T]]')$ is not AF. This is done by showing that $C^*_\pi([[T]])=C^*_\pi([[T]]')$, and that the kernel of the character $\tau$ on $C^*_\pi([[T]])$ coming from containment of the trivial representation is a hereditary $C^*$-subalgebra of $C(X)\rtimes\mathbb{Z}$. Consequently, $\ker\tau$ is stably isomorphic to $C(X)\rtimes\mathbb{Z}$, and $C^*_\pi([[T]]')$ is not AF and has real rank zero.

In Section \ref{odom}, we discuss examples coming from odometers.

\section{Elementary amenable groups and real rank zero}\label{rank}

Recall that the class of elementary amenable groups is the smallest class of groups containing all abelian and all finite groups, and closed under taking subgroups, quotients, extensions and inductive limits. 

We will use the following fact about elementary amenable groups, due to Hillman (\cite[Chapter I, Lemma 1]{MR1275829}). See also \cite[Lemma 1]{MR3586092} for a slightly different proof.

\begin{lemma}[\cite{MR1275829}]\label{aff}

If $G$ is an infinite, finitely generated, elementary amenable group, then there is a subgroup of finite index of $G$ which admits a homomorphism onto $\mathbb{Z}$.
 
\end{lemma}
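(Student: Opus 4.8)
The plan is to argue by transfinite induction using Chou's filtration of the class of elementary amenable groups. Recall that this class can be written as an increasing union $\bigcup_\alpha\mathcal{X}_\alpha$ indexed by ordinals, where $\mathcal{X}_0$ is the class of all finite and all abelian groups, where $\mathcal{X}_\alpha=\bigcup_{\beta<\alpha}\mathcal{X}_\beta$ for limit $\alpha$, and where $\mathcal{X}_{\beta+1}$ is obtained from $\mathcal{X}_\beta$ by adjoining all extensions $1\to A\to H\to B\to 1$ with $A,B\in\mathcal{X}_\beta$ together with all directed unions of subgroups lying in $\mathcal{X}_\beta$. I would prove by induction on $\alpha$ the assertion $P(\alpha)$: \emph{every infinite, finitely generated group in $\mathcal{X}_\alpha$ possesses a finite-index subgroup that surjects onto $\mathbb{Z}$}. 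Applying $P(\alpha)$ to any $\alpha$ with $G\in\mathcal{X}_\alpha$ then gives the lemma.

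The case $\alpha=0$ is clear: an infinite, finitely generated abelian group is isomorphic to $\mathbb{Z}^n\oplus F$ with $n\geq 1$ and $F$ finite, hence surjects onto $\mathbb{Z}$. The limit case is immediate, since $\mathcal{X}_\alpha$ is then merely the union of the earlier classes. For the successor step, take $G\in\mathcal{X}_{\beta+1}$ infinite and finitely generated. If $G$ is a directed union of subgroups from $\mathcal{X}_\beta$, then its finite generating set lies in a single term of the union, so $G$ itself belongs to $\mathcal{X}_\beta$ and $P(\beta)$ applies. Otherwise $G$ fits in an extension $1\to A\to G\to B\to 1$ with $A,B\in\mathcal{X}_\beta$, and $B$ is finitely generated as a quotient of $G$. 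If $B$ is infinite, then by $P(\beta)$ it has a finite-index subgroup $B_0$ surjecting onto $\mathbb{Z}$; the preimage of $B_0$ in $G$ then has finite index in $G$ and surjects onto $\mathbb{Z}$. If $B$ is finite, then $A$ has finite index in $G$, so $A$ is finitely generated and infinite, and $P(\beta)$ produces a finite-index subgroup $A_0\leq A$ surjecting onto $\mathbb{Z}$; this $A_0$ has finite index in $G$ as well.

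Since each step is elementary, the real content is organizational: choosing the right transfinite filtration (for which I rely on Chou) and propagating finite generation through the induction. The one non-formal ingredient, used in the finite-quotient case, is the fact that a finite-index subgroup of a finitely generated group is again finitely generated (Reidemeister--Schreier); this is exactly what permits the inductive hypothesis to be applied to the kernel $A$, and I expect it to be the only genuine obstacle. A less hands-on alternative would invoke the Hillman--Linnell structure theorem for elementary amenable groups of finite Hirsch length, but the induction above is self-contained and seems to be the most direct route.
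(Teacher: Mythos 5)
Your argument is correct. Note, though, that the paper does not prove this lemma at all: it is quoted from Hillman \cite[Chapter I, Lemma 1]{MR1275829} (with \cite[Lemma 1]{MR3586092} mentioned as an alternative source), and the cited proofs run along essentially the same lines as yours, namely transfinite induction over the Chou hierarchy. Your case analysis is sound: the base case ($\mathbb{Z}^n\oplus F$ with $n\geq 1$), the collapse of the directed-union case via finite generation, and the two subcases of the extension step (pull back a finite-index subgroup of an infinite quotient, or pass to the finite-index kernel, which is finitely generated by Schreier's lemma) all check out. The one point worth flagging is that your filtration is closed at each stage only under extensions and directed unions, whereas the paper's definition of elementary amenable also closes under subgroups and quotients; to know that every elementary amenable group actually appears in some $\mathcal{X}_\alpha$ you need Chou's theorem that $\bigcup_\alpha\mathcal{X}_\alpha$ is already closed under subgroups and quotients. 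You acknowledge relying on Chou for the filtration, so this is a citation issue rather than a gap, but it is the one external input your induction genuinely requires beyond Schreier's lemma.
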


A $C^*$-algebra $A$ is said to have real rank zero if every hereditary $C^*$-subalgebra of $A$ has an approximate unit of projections (not necessarily increasing). We refer the reader to, for example, \cite[Section V.7]{MR1402012} for other equivalent definitions of real rank zero.

\begin{lemma}\label{blim}
If $A$ is an infinite-dimensional, real rank zero $C^*$-algebra, then it contains a sequence of non-zero, orthogonal projections.
\end{lemma}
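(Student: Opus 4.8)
The plan is to reduce the problem to the easier task of producing an infinite family of pairwise orthogonal nonzero \emph{positive} elements, and then to upgrade each such element to a projection using the real rank zero hypothesis. Concretely, suppose one can find positive elements $h_1,h_2,\dots\in A$, all nonzero, with $h_ih_j=0$ for $i\neq j$. Then the hereditary subalgebras $H_i:=\overline{h_iAh_i}$ are nonzero and pairwise orthogonal: for $x=h_iah_i$ and $y=h_jbh_j$ one has $xy=h_ia(h_ih_j)bh_j=0$ (and likewise $yx=0$, since $h_jh_i=(h_ih_j)^*=0$), so $H_iH_j=\{0\}$ by continuity of multiplication and density. Since real rank zero is inherited by hereditary subalgebras (a standard result of Brown and Pedersen), each $H_i$ again has real rank zero, and being nonzero it contains a nonzero projection $e_i$; indeed, applying the defining property to the hereditary subalgebra $H_i$ itself, any nonzero member of an approximate unit of projections for $H_i$ works. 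As $e_i\in H_i$ and the $H_i$ are orthogonal, we get $e_ie_j\in H_iH_j=\{0\}$, so the $e_i$ are pairwise orthogonal and nonzero, which is exactly the desired sequence.

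I expect the main obstacle to be the first step: manufacturing the orthogonal positive elements from mere infinite-dimensionality. The key point is that an infinite-dimensional $C^*$-algebra contains a self-adjoint element $a$ whose spectrum $\sigma(a)$ is infinite. Granting this, the construction is routine via continuous functional calculus: choose distinct points $t_1,t_2,\dots\in\sigma(a)$ together with pairwise disjoint open neighbourhoods $U_n\ni t_n$, and pick continuous functions $f_n\geq 0$ supported in $U_n$ with $f_n(t_n)>0$. Then $h_n:=f_n(a)$ is positive and nonzero (because $t_n\in\sigma(a)$ forces $f_n(a)\neq 0$), and $h_nh_m=(f_nf_m)(a)=0$ for $n\neq m$ since the $f_n$ have disjoint supports. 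To justify the existence of such an $a$, I would argue by contraposition: if every self-adjoint element of $A$ had finite spectrum, then each self-adjoint element would be a finite real-linear combination of its finitely many spectral projections, which lie in $A$; a standard argument then forces $A$ to be finite-dimensional, contradicting the hypothesis. This is a well-known property of infinite-dimensional $C^*$-algebras, which in the final write-up I would either cite or establish through the short spectral argument just indicated.

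Putting the two halves together yields the statement, and I would present the proof in exactly this order: first the infinite orthogonal family of positive elements, then the passage to pairwise orthogonal nonzero hereditary subalgebras, and finally the extraction of the projections $e_i$. Beyond the definition of real rank zero supplied in the excerpt, the only inputs are that real rank zero descends to hereditary subalgebras and that a nonzero real rank zero algebra possesses a nonzero projection; the latter is immediate from the approximate-unit-of-projections formulation, since the algebra is a hereditary subalgebra of itself.
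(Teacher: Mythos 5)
Your proposal is correct and follows essentially the same route as the paper: produce pairwise orthogonal nonzero positive elements $a_n$ from infinite-dimensionality, then extract a nonzero projection from each hereditary subalgebra $\overline{a_nAa_n}$, which inherits real rank zero. The only difference is that you spell out the spectral argument for the first step, which the paper simply cites as a standard exercise.
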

\begin{proof}
Since $A$ is infinite-dimensional, there is a sequence $(a_n)_{n\in\mathbb{N}}\subset A$ of non-zero, positive elements such that $a_ja_k=0$ when $j\neq k$ (see, for example, \cite[Exercise 4.6.13]{MR1468229} or \cite{MR0066569}).

 For each $n\in\mathbb{N}$, take a non-zero projection $p_n$  in the hereditary (hence real rank zero) $C^*$-subalgebra $\overline{a_nAa_n}$. By construction, $p_jp_k=0$ when $j\neq k$.
\end{proof}
\begin{theorem}\label{tempo}
If $G$ is a finitely generated, elementary amenable group and $C^*(G)$ has real rank zero, then $G$ is finite.
\end{theorem}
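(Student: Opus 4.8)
The plan is to argue by contradiction: assuming $G$ is infinite, I will manufacture a quotient of $C^*(G)$ that cannot have real rank zero. Since $G$ is finitely generated and elementary amenable, Lemma \ref{aff} provides a finite-index subgroup $H\leq G$, say $[G:H]=n$, together with a surjection $\phi\colon H\twoheadrightarrow\mathbb{Z}$. For each $z\in\mathbb{T}$ let $\chi_z\in\widehat{H}$ be the character $\chi_z(h)=z^{\phi(h)}$, and form the induced representation $\sigma_z=\mathrm{Ind}_H^G\chi_z\colon G\to U(n)$. As each matrix entry of $\sigma_z(g)$ is a Laurent monomial in $z$ (hence continuous), the assignment $g\mapsto(z\mapsto\sigma_z(g))$ is a unitary representation of $G$ in the unitary group of $C(\mathbb{T},M_n)$, so the universal property of the full group $C^*$-algebra yields a unital $*$-homomorphism $\Phi\colon C^*(G)\to C(\mathbb{T},M_n)$. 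Its image $B:=\Phi(C^*(G))$ is a quotient of $C^*(G)$; since real rank zero passes to quotients, it suffices to show that $B$ does not have real rank zero.

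The key point is that $C(\mathbb{T},M_n)$ is a continuous field of matrix algebras over the connected base $\mathbb{T}$. If $g\in B$ is self-adjoint with finite spectrum, then by spectral permanence $\operatorname{spec}_{C(\mathbb{T},M_n)}(g)=\operatorname{spec}_B(g)$ is finite, so for each $\lambda$ the spectral projection $\chi_{\{\lambda\}}(g)$ lies in $C(\mathbb{T},M_n)$ and is a norm-continuous field of projections; as $\mathbb{T}$ is connected its fiberwise rank is constant. Hence the fiberwise spectrum of $g$ is independent of $z$, and in particular $z\mapsto\operatorname{tr}(g(z))$ is constant. I will contradict this by exhibiting a self-adjoint element of $B$ whose fiberwise trace genuinely moves. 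Take $h\in H$ with $\phi(h)=1$ and put $b:=\Phi(\delta_h+\delta_{h^{-1}})$, so that $b(z)=\sigma_z(h)+\sigma_z(h)^*$ and $\operatorname{tr}(b(z))=2\,\mathrm{Re}\,\operatorname{tr}\sigma_z(h)$. By the induced-character formula, $\operatorname{tr}\sigma_z(h)=\sum z^{\phi(g^{-1}hg)}$, the sum ranging over the cosets $gH$ with $g^{-1}hg\in H$; the coset $H$ itself contributes $z^{\phi(h)}=z$, and all contributions enter with coefficient $+1$, so the $z^1$-coefficient is at least $1$ and cannot cancel. Thus $\operatorname{tr}(b(z))$ has a nonzero $\cos\theta$-component and is non-constant. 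Since $|\operatorname{tr}(b(z))-\operatorname{tr}(g(z))|\leq n\|b-g\|$, no finite-spectrum self-adjoint $g\in B$ can approximate $b$, so $B$ fails to have real rank zero; this contradiction shows $G$ is finite.

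The step I expect to be the main obstacle is handling the genuinely \emph{virtual} conclusion of Lemma \ref{aff}. When $G$ itself surjects onto $\mathbb{Z}$ one may take $n=1$, and then $B\cong C(\mathbb{T})$ is commutative with connected spectrum, so it has no nontrivial projections and the failure of real rank zero is immediate (one could even invoke Lemma \ref{blim}). But in general only a finite-index subgroup surjects onto $\mathbb{Z}$ — for instance the infinite dihedral group has finite abelianization — so there need be no commutative infinite-dimensional quotient of $C^*(G)$, and the field $B\subseteq C(\mathbb{T},M_n)$ with $n\geq2$ may carry many projections, so that Lemma \ref{blim} no longer applies. The delicate part is therefore to detect the nontriviality of this continuous field over $\mathbb{T}$ directly, which is exactly what the moving fiberwise trace accomplishes; ensuring that the relevant $z$-monodromy survives (the non-cancellation of the $z^1$-term in the induced character, guaranteed by the nonnegativity of its coefficients) is the technical heart of the argument. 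I would also need to record the two standard facts used along the way: that real rank zero is inherited by quotients, and spectral permanence for unital $C^*$-subalgebras.
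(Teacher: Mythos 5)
Your proof is correct, and its skeleton coincides with the paper's: both pass to the finite-index subgroup $H$ given by Lemma \ref{aff}, use the coset decomposition to produce a unital $*$-homomorphism from $C^*(G)$ onto a subalgebra $B$ of $M_n(C^*(\mathbb{Z}))\simeq C(\mathbb{T},M_n)$ (the paper builds it as $\varphi_n\circ\psi$ with $\psi\colon C^*(G)\hookrightarrow M_n(C^*(H))$ the standard corner embedding; your fibrewise induced representations $\mathrm{Ind}_H^G\chi_z$ assemble into the same map), and then exploit connectedness of $\mathbb{T}$ together with the fact that real rank zero passes to quotients. Where you diverge is the endgame. The paper checks that $B$ is infinite-dimensional (its $(1,1)$-corner contains $\varphi(\lambda_H(h))$ for all $h\in H$) and invokes Lemma \ref{blim} to produce an infinite family of pairwise orthogonal non-zero projections inside $M_n(C(\mathbb{T}))$; since each such projection has constant rank at least $1$ over the connected base and orthogonality forces the ranks to sum to at most $n$, this is absurd. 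You instead use the characterization of real rank zero by density of finite-spectrum self-adjoint elements, observe that such elements have constant fibrewise trace (constant rank of spectral projections), and exhibit the explicit witness $b=\Phi(\delta_h+\delta_{h^{-1}})$ whose fibrewise trace is a non-constant trigonometric polynomial by the induced character formula and the non-negativity of its coefficients. Both arguments are sound; yours trades the abstract Lemma \ref{blim} for a concrete element, and so does not need to verify infinite-dimensionality of $B$ separately, at the cost of the character computation. One small correction to your closing remarks: Lemma \ref{blim} is not defeated by the case $n\geq 2$ — the relevant point is not that $B$ has few projections, but that $M_n(C(\mathbb{T}))$ cannot contain an infinite pairwise orthogonal family of non-zero projections, again by constancy of rank over a connected space.
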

\begin{proof}
Suppose $G$ is infinite. By Lemma \ref{aff}, there is a subgroup $H$ of $G$ with finite index $n$, and $\Phi\colon H\to \mathbb{Z}$ a surjective homomorphism. Let $\varphi\colon C^*(H)\to C^*(\mathbb{Z})$ be the $*$-homomorphism induced by $\Phi$, and $\varphi_n\colon M_n(C^*(H))\to M_n(C^*(\mathbb{Z}))$ the inflation of $\varphi$.

There is an injective $*$-homomorphism $\psi\colon C^*(G)\to M_n(C^*(H))$ such that the image of $\varphi_n\circ \psi$ is infinite-dimensional. For the convenience of the reader, we sketch the construction of $\psi$, which is standard.

Let $x_1,\dots,x_n\in G$ be such that $x_1=e$ and $G=\sqcup_{i=1}^nx_iH$. Consider the following unitary defined on canonical basis vectors:
\begin{align*}
U\colon\bigoplus_{i=1}^n\ell^2(H)&\to\ell^2(G)\\
\delta_{i,h}&\mapsto\delta_{x_ih}.
\end{align*}

 Let $S\colon B(\ell^2(G))\to M_n(B(\ell^2(H))$ be the isomorphism induced by $U$. 

By using the left regular representations $\lambda_G$ and $\lambda_H$, we see $C^*(G)$ as contained in $B(\ell^2(G))$ and analogously for $C^*(H)$.

It is easy to check that $S(\lambda_G(g))\in M_n(C^*(H))$ for every $g\in G$. Hence, $S(C^*(G))\subset M_n(C^*(H))$. Furthermore, for $h\in H$, we have that $S(\lambda_G(h))_{1,1}=\lambda_H(h)$. Let $\psi:=S|_{C^*(G)}$. Then $\varphi_n( \psi(C^*(G)))$ is infinite-dimensional. 

 Hence, by Lemma \ref{blim}, $M_n(C^*(\mathbb{Z}))\simeq M_n(C(\mathbb{T}))$ contains a sequence of non-zero, orthogonal projections. Since $\mathbb{T}$ is connected, we get a contradiction. Hence, $G$ is finite. 
\end{proof}

\begin{remark}
Recall that a $C^*$-algebra $A$ is said to have property (SP) if every non-zero hereditary $C^*$-subalgebra of $A$ contains a non-zero projection. Furthermore, $A$ is said to have residual property (SP) if every quotient of $A$ has property (SP) (see \cite[Section 7]{MR3352760} for more details about these properties).

In the proof of Theorem \ref{tempo}, the only aspects of real rank zero that were used are that it implies property (SP) and that having real rank zero is closed under taking quotients. In particular, Theorem \ref{tempo} remains true if one replaces ``real rank zero" by ``residual property (SP)".
\end{remark}

\section{Koopman representation of a topological full group}\label{main}

Given a unitary representation $\pi$ of a group $G$, we denote by $C^*_\pi(G)$ the $C^*$-algebra generated by the image of $\pi$.

We will denote the Cantor set by $X$.

Let $\alpha$ be an action of a group $G$ on $X$ by homeomorphisms. The topological full group associated to $\alpha$, denoted by $[[\alpha]]$, is the group of all homeomorphisms $\gamma$ on $X$ for which there exists a finite partition of $X$ into clopen sets $\{A_i\}_{i=1}^n$ and $g_1,\dots,g_n\in G$ such that $\gamma|{A_i}=\alpha_{g_i}|_{A_i}$ for $1\leq i \leq n$. That is, $[[\alpha]]$ consists of the homeomorphisms on $X$ which are locally given by the action $\alpha$. 

Fix $T$ a minimal homeomorphim on $X$. We denote by $[[T]]$ the topological full group associated to the $\mathbb{Z}$-action induced by $T$. 

Let $\mu$ be a $T$-invariant probability measure on $X$. Note that $\mu$ is also invariant under the action of $[[T]]$ on $X$. Let $\pi\colon[[T]]\to B(L^2(X,\mu))$ be given by $\pi(g)(f):=f\circ g^{-1}$, for $g\in [[T]]$ and $f\in L^2(X,\mu)$. This $\pi$ is the so called Koopman representation associated to the action of $[[T]]$ on $(X,\mu)$.

We will use the faithful representation of $C(X)\rtimes\mathbb{Z}$ in $B(L^2(X,\mu))$, with $C(X)$ acting by multiplication operators, and, for n$\in\mathbb{Z}$, $\delta_n:=\pi(T^n)$, so that $C(X)\rtimes\mathbb{Z}:=\overline{\Span}\{f\delta_n:f\in C(X),n\in\mathbb{Z}\}$.  

Given $g\in[[T]]$ and $\{A_i\}_{i=1}^n$ a partition of $X$ into clopen sets such that $g|_{A_i}=T^{n_i}|{A_i}$ for $1\leq i\leq n$, notice that 

\begin{equation}
\pi(g)=\sum 1_{T^{n_i}(A_i)}\delta_{n_i}. \label{deta}
\end{equation}
In particular, $C^*_\pi([[T]])\subset C(X)\rtimes\mathbb{Z}$.

\begin{definition}
Given $n\in\mathbb{N}$, we say that a subset $A\subset X$ is $n$-disjoint if 
$$A, T(A), \dots, T^{n-1}(A)$$
 are pairwise disjoint.
\end{definition}

Suppose $A\subset X$ is a clopen and $n$-disjoint set. Consider the symmetric group $S_n$ acting on $\{0,\dots,n-1\}$. For $\sigma\in S_n$, let $\sigma_A\in[[T]]$ be given by 
\begin{align}\label{sigma}
\sigma_A(x)=\begin{cases}T^{\sigma(i)-i}(x), &\text{if $0\leq i < n$ and $x\in T^i(A)$ }\\
x, & \text{if $x\notin \sqcup_{i=0}^{n-1}T^i(A)$},\end{cases}\quad x\in X.
\end{align}

Note that, for $0\leq i <n$, $\sigma_A(T^i(A))=T^{\sigma(i)}(A)$.

\begin{lemma}\label{sym}
Let $n\geq 4$ and $A\subset X$ be a clopen and $n$-disjoint set. For every $\sigma\in S_n$, it holds that $\pi(\sigma_A)\in C^*_\pi([[T]]')$.
\end{lemma}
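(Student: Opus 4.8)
The goal is to show that for $n \geq 4$, every permutation element $\pi(\sigma_A)$ with $\sigma \in S_n$ lies in $C^*_\pi([[T]]')$. The natural strategy is to exploit the algebraic structure of $S_n$: since $n \geq 4$, the commutator subgroup of $S_n$ is the alternating group $A_n$, and $A_n$ is generated by $3$-cycles (in fact, for $n \geq 5$, $A_n$ is simple and equals its own commutator subgroup). So the plan is first to handle even permutations $\sigma$ by writing $\sigma_A$ as a product of commutators in $[[T]]$, and then to handle odd permutations separately.

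First I would establish the key compatibility fact that the assignment $\sigma \mapsto \sigma_A$ is a group homomorphism from $S_n$ into $[[T]]$: from the defining formula \eqref{sigma} one checks that $\sigma_A \tau_A = (\sigma\tau)_A$, since on each piece $T^i(A)$ the map $\tau_A$ sends it to $T^{\tau(i)}(A)$ and then $\sigma_A$ sends that to $T^{\sigma\tau(i)}(A)$, with the shifts composing additively. Granting this, for a commutator $\sigma = [\rho,\nu] = \rho\nu\rho^{-1}\nu^{-1}$ in $S_n$ we get $\sigma_A = [\rho_A, \nu_A]$, which lies in $[[T]]'$; hence $\pi(\sigma_A) \in C^*_\pi([[T]]')$ directly. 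Since $A_n = (S_n)'$ for $n \geq 4$, this immediately covers all even $\sigma$.

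The remaining work is the odd permutations, and this is where I expect the main obstacle to lie: an odd permutation is not itself a commutator in $S_n$, so the map $\sigma \mapsto \sigma_A$ alone does not place $\pi(\sigma_A)$ in $C^*_\pi([[T]]')$. The idea to circumvent this is that $\sigma_A$ only rearranges the $n$ levels $A, T(A), \dots, T^{n-1}(A)$ while fixing everything else, so I have freedom to use \emph{more} levels than the parity of $\sigma$ strictly requires. Concretely, because $A$ is $n$-disjoint and $n \geq 4$, I would try to realize an odd $\sigma \in S_n$ as (the restriction of) an \emph{even} permutation acting on a larger block of levels — for instance by passing to a refinement or a subset $B \subset A$ that is $m$-disjoint for some $m > n$, so that the corresponding permutation element becomes even and hence a product of commutators. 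Alternatively, and perhaps more cleanly, I would write an odd transposition-type element as a commutator by introducing an auxiliary element of $[[T]]$ that shifts $A$ off itself; the transposition of two levels, together with a disjoint transposition on shifted copies, assembles into an even permutation that is a commutator, and the extra disjoint factor can be arranged to have the same $\pi$-image or to be cancelled.

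The cleanest route, which I would pursue first, is the following. For a transposition $\sigma = (i\ j)$, one has the commutator identity $(i\ j) = [(i\ j\ k),(i\ k\ \ell)]$ or similar once we embed into a large enough alternating group — but since a single transposition is odd this cannot work inside $S_n$ itself. Instead I would use the $n$-disjointness to split: choose a clopen $B \subsetneq A$ and write the odd permutation on the $A$-levels as a product of an even permutation (a commutator, handled above) and a controlled ``small'' odd piece supported on $B$ and its translates, then iterate or use the simplicity of $[[T]]'$ (via Matui's result) together with the fact already noted that $A_n$-type elements are in $[[T]]'$. The main technical point to get right is ensuring all the auxiliary sets remain clopen and suitably disjoint under the powers of $T$ so that each constructed element genuinely lies in $[[T]]$, and that the parity bookkeeping closes up; I expect this disjointness-and-parity management to be the crux of the argument.
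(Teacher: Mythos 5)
Your reduction of the even case is correct and coincides with the first half of the paper's argument: $\sigma\mapsto\sigma_A$ is a homomorphism from $S_n$ into $[[T]]$, so it carries $(S_n)'=A_n$ into $[[T]]'$, and $\pi(\sigma_A)\in C^*_\pi([[T]]')$ for even $\sigma$. But for odd $\sigma$ --- which is the entire content of the lemma --- you only list candidate strategies (refining $A$, passing to more levels, invoking simplicity of $[[T]]'$) and explicitly defer the ``disjointness-and-parity management'' that you yourself identify as the crux. None of these routes is carried out, and it is not clear any of them closes up: splitting $A=B\sqcup C$ and pairing a transposition on the $B$-levels with one on the $C$-levels does produce an even permutation of $2n$ sets, but realizing it as an element of $[[T]]'$ requires an element of $[[T]]$ interchanging the $B$-levels with the $C$-levels, a nontrivial additional construction you do not supply. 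As written, the proposal establishes the lemma only for $\sigma\in A_n$.

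The missing idea is that the odd case should be handled at the level of the generated $C^*$-algebra rather than the group. The paper observes that $\{1_{T^i(A)}\delta_{i-j}\}_{0\le i,j<n}$ is a system of matrix units, so that $\sigma\mapsto\pi(\sigma_A)$ factors through the finite-dimensional representation $\rho=\mathrm{triv}\oplus\mathrm{perm}$ of $S_n$ on $\mathbb{C}\oplus\mathbb{C}^n$ via a $*$-homomorphism $\mathbb{C}\oplus M_n(\mathbb{C})\to C(X)\rtimes\mathbb{Z}$. For $n\ge 4$ the permutation representation restricted to $A_n$ still decomposes as trivial plus an irreducible of degree $n-1$, so $C^*_\rho(A_n)=C^*_\rho(S_n)$; hence every $\pi(\sigma_A)$, odd or even, already lies in the $C^*$-algebra generated by $\{\pi(\tau_A):\tau\in A_n\}\subset\pi([[T]]')$. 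No commutator expression for odd permutations is needed --- only the elementary representation theory of $S_n$ and $A_n$. Note also that this is where the hypothesis $n\ge 4$ actually enters (for $n=3$ the restriction to $A_3$ splits into one-dimensional characters and $C^*_\rho(A_3)\subsetneq C^*_\rho(S_3)$); your proposal invokes $n\ge4$ only for $(S_n)'=A_n$, which holds for all $n$, and so never engages with the real role of that hypothesis.
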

\begin{proof}

Notice first that $\{1_{T^i(A)}\delta_{i-j}\}_{0\leq i,j< n}$ forms a system of matrix units in $C(X)\rtimes\mathbb{Z}$ of type $M_n(\mathbb{C})$ (we see $M_n(\mathbb{C})$ as matrices indexed by the set $\{0,\dots,n-1\}$).

  Let $B:=(\sqcup_{i=0}^{n-1}T^i(A))^\mathrm{c}$ and $\varphi\colon \mathbb{C}\oplus M_n(\mathbb{C})\to C(X)\rtimes\mathbb{Z}$ be the $*$-homomorphism given by $\varphi(\alpha,e_{ij}):=\alpha 1_B+1_{T^i(A)}\delta_{i-j}$, for $\alpha\in\mathbb{C}$ and $0\leq i,j\leq n-1$.  
  
  Let $\rho\colon S_n\to \mathbb{C}\oplus M_n(\mathbb{C})$ be the direct sum of the trivial representation and the permutation representation. 

Given $\sigma\in S_n$, by \eqref{deta} and \eqref{sigma}, it holds that $\pi(\sigma_A)=1_B+\sum1_{T^{\sigma(i)}(A)}\delta_{\sigma(i)-i}=\varphi(\rho(\sigma))$.

  Furthermore, since $n\geq 4$, the permutation representations of $S_n'$ and $S_n$ decompose into the direct sum of a trivial representation and an irreducible representation of degree $n-1$. Therefore, we have that $C^*_\rho((S_n)')=C^*_\rho(S_n)$.

Hence, $\pi(\sigma_A)\in  C^*_\pi([[T]]')$ for any $\sigma\in S_n$.

\end{proof}

Given $A\subset X$ clopen, consider the continuous function
\begin{align*}
t_A\colon A&\to\mathbb{N}\\
x&\mapsto\min\{k\geq 1:T^k(x)\in A\}.
\end{align*}
This is the so called function of first return to $A$. 

Notice that, for $j\in\mathbb{Z}$, it holds that 
\begin{equation}\label{fac}
t_{T^j(A)}\circ T^j|_A=t_A.
\end{equation}

  Let $T_A\in[[T]]$ be defined by
  \begin{align}\label{time}
T_A(x)=\begin{cases}T^{t_A(x)}(x), &\text{if $x\in A$ }\\
x, & \text{otherwise}.\end{cases},\quad x\in X.
\end{align}

If $B\subset X$ is a clopen set disjoint from $A$, then $T_A$ and $T_B$ commute.

In order to prove Lemma \ref{tempi}, we will have to analyze the spectrum of $C^*$-algebras generated by certain commuting unitaries, and the next lemma will be useful for this. 

We consider the circle $\mathbb{T}$ as a pointed space with basepoint $1$.

\begin{lemma}\label{univ}
The universal $C^*$-algebra generated by commuting unitaries $z_1,\dots,z_n$ subject to the relations $\{(z_i-1)(z_j-1)=0:1\leq i\neq j\leq n\}$ is $C(\bigvee_{k=1}^n \mathbb{T})$, with $z_k$ being given by
\begin{align*}
  z_k\colon  \bigvee_{i=1}^n \mathbb{T}&\to \mathbb{C} \\
  (x,i)&\mapsto\begin{cases}x, &\text{if } i=k \\
1, & \text{if } i\neq k. \end{cases}
\end{align*}

\end{lemma}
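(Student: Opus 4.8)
The plan is to invoke Gelfand duality. The generators $z_1,\dots,z_n$ are, by hypothesis, commuting unitaries, so the universal $C^*$-algebra $A$ they generate is commutative; indeed the universal $C^*$-algebra generated by $n$ commuting unitaries (with no further relations) is $C(\mathbb{T}^n)$, with $z_k$ the $k$-th coordinate function, and imposing the additional relations $(z_i-1)(z_j-1)=0$ amounts to passing to the quotient $C(\mathbb{T}^n)/I$, where $I$ is the closed ideal generated by the functions $(z_i-1)(z_j-1)$. Since $C(\mathbb{T}^n)$ is commutative, this quotient is $C(Y)$, with $Y\subset\mathbb{T}^n$ the common zero set (hull) of the generators of $I$, equipped with the subspace topology. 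So it remains to identify $Y$ and to check that the restrictions of the coordinate functions $z_k$ to $Y$ have the stated form.

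Next I would compute $Y$ explicitly. At a point $(x_1,\dots,x_n)\in\mathbb{T}^n$ the relation $(x_i-1)(x_j-1)=0$ holds precisely when $x_i=1$ or $x_j=1$. Demanding this for every pair $i\neq j$ is equivalent to requiring that at most one coordinate differs from $1$: if two coordinates were both $\neq 1$ the corresponding relation would fail, while if at most one is $\neq 1$ then every pair contains a coordinate equal to $1$. Hence
\begin{equation*}
Y=\bigcup_{k=1}^n\{(x_1,\dots,x_n)\in\mathbb{T}^n:x_j=1\text{ for all }j\neq k\}.
\end{equation*}
Each set in this union is a copy of $\mathbb{T}$ (only the $k$-th coordinate varies), any two of them meet exactly at the point $(1,\dots,1)$, and the subspace topology inherited from $\mathbb{T}^n$ is exactly the wedge topology. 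Thus $Y\cong\bigvee_{k=1}^n\mathbb{T}$ as a pointed compact Hausdorff space, with basepoint $(1,\dots,1)$. Under this identification the coordinate function $z_k$ sends a point of the $k$-th circle to its $\mathbb{T}$-coordinate and sends every point of the remaining circles to $1$, which is exactly the formula in the statement.

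The steps above are essentially routine once the reduction to $C(\mathbb{T}^n)$ is in place, so the only point deserving care is the justification that imposing the relations yields the hull $C(Y)$ with the correct topology, i.e.\ that $A$ really is the commutative algebra $C(Y)$ rather than some larger completion. I expect this to be the main (mild) obstacle. One clean way to settle it, avoiding an appeal to the quotient description, is to verify the universal property of $C(\bigvee_{k=1}^n\mathbb{T})$ directly: the functions $z_k$ defined in the statement are unitaries in $C(\bigvee_{k=1}^n\mathbb{T})$ satisfying the given relations, and by the Stone--Weierstrass theorem they generate $C(\bigvee_{k=1}^n\mathbb{T})$, since together with their adjoints and the unit they separate the points of $\bigvee_{k=1}^n\mathbb{T}$. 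Conversely, given commuting unitaries $u_1,\dots,u_n$ satisfying $(u_i-1)(u_j-1)=0$ in a $C^*$-algebra $B$, the joint spectrum of the $u_k$ is contained in $Y$ by the same computation as above, and the continuous functional calculus furnishes a $*$-homomorphism $C(\bigvee_{k=1}^n\mathbb{T})\to B$ with $z_k\mapsto u_k$; uniqueness is immediate since the $z_k$ generate. This establishes the universal property and completes the proof.
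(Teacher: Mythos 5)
Your argument is correct, and its overall strategy coincides with the paper's: realize the algebra as a quotient of $C(\mathbb{T}^n)$ (the universal $C^*$-algebra on $n$ commuting unitaries) and identify the relevant closed subset of $\mathbb{T}^n$ with the wedge $\bigvee_{k=1}^n\mathbb{T}$ embedded along the coordinate circles. The difference lies in how the key step is justified. The paper proves by hand, via Stone--Weierstrass applied to $C_0\bigl(\mathbb{T}^n\setminus\Ima(F)\bigr)$, that the $C^*$-algebra generated by the elements $(w_i-1)^k(w_j-1)^l$ ($i\neq j$) is exactly the kernel of the restriction map to the wedge; this requires the small separation argument with the powers $k$. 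You instead invoke the general hull--kernel correspondence for closed ideals in commutative $C^*$-algebras, which makes that computation unnecessary, and you also sketch a second, self-contained route that bypasses quotients altogether: verify the universal property directly by noting that the stated functions $z_k$ generate $C(\bigvee_{k=1}^n\mathbb{T})$ (Stone--Weierstrass) and that any commuting unitaries satisfying the relations have joint spectrum contained in the wedge, so functional calculus supplies the required $*$-homomorphism. Both of your routes are sound; the second is arguably the cleanest, since it never needs to describe the ideal generated by the relations, only the joint spectrum of its solutions. What the paper's more explicit computation buys is a concrete description of the kernel of $F'$ as a $C^*$-algebra generated by specific elements, but that extra information is not used elsewhere.
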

\begin{proof}
Consider the embedding $F\colon \bigvee_{i=1}^n \mathbb{T}\to \mathbb{T}^n$ which takes $x$ in the $i$-th copy of $\mathbb{T}$ and sends it into $(F(x)_i)_{1\leq i \leq n}\in\mathbb{T}^n$ such that $F(x)_i:=x$ and $F(x)_j:=1$ if $j\neq i$. Also let $F'\colon C(\mathbb{T}^n)\to C(\bigvee_{i=1}^n \mathbb{T})$ be given by $F'(f):=f\circ F$, for $f\in C(\mathbb{T}^n)$. 

For $1\leq i\leq n$, let $w_i\in C(\mathbb{T}^n)$ be given by $w_i(y):=y_i$, for $y\in \mathbb{T}^n$. Then $F'(w_i)=z_i$.

Assume $n>1$. Let $A:=C^*(\{(w_i-1)^k(w_j-1)^l: i\neq j\text{ and }k,l\in\mathbb{N}\})$. We claim that $\ker F'=A$. Clearly, $A\subset \ker F'$.

Let $Y:=\mathbb{T}^n\setminus\Ima(F)$. Notice that $\ker F'=\{f\in C(\mathbb{T}^n):f|_{\Ima(F)}=0\}\simeq C_0(Y)$. By the Stone-Weierstrass Theorem, in order to show that $A= C_ 0(Y)$, it is sufficient to show that, for every $y\in Y$, there is $f\in A$ such that $f(y)\neq 0 $, and that $A$ separates the points of $Y$. The proof of the former condition is trivial, so we only show that $A$ separates the points of $Y$.

Take $(x_1,\dots,x_n),(y_1,\dots y_n)\in Y$ distinct points. There is $i$ such that $x_i\neq y_i$. Without loss of generality, assume $x_i\neq 1$. Take $j\neq i$ such that $x_j\neq 1$. Then, by choosing $k\in\mathbb{N}$ appropriately, we get $(x_i-1)^k(x_j-1)\neq(y_i-1)^k(y_j-1)$.

Since $C(\mathbb{T}^n)$ is the universal $C^*$-algebra generated by $n$ commuting unitaries and $ C(\bigvee_{i=1}^n \mathbb{T})$ is generated by $\{z_1,\dots,z_n\}$, the result follows.

\end{proof}

\begin{lemma}\label{tempi}
Let $A\subset X$ be a clopen and $3$-disjoint set. Then $\pi(T_A)\in C^*_\pi([[T]]')$.
\end{lemma}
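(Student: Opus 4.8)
The plan is to exploit the $3$-disjointness of $A$ by comparing $T_A$ with its first two $T$-translates and then invoking Lemma~\ref{univ}. Set $B_i := T^i(A)$ for $i=0,1,2$; since $A$ is $3$-disjoint these are pairwise disjoint clopen sets, and by \eqref{fac} the first return maps satisfy $T_{B_i}=T^iT_AT^{-i}$. First I would record two structural facts about the unitaries $u_i:=\pi(T_{B_i})$. Because each $T_{B_i}$ fixes the complement of $B_i$ pointwise and permutes $B_i$ via the induced map, the operator $u_i-1$ is supported on $L^2(B_i,\mu)$; as the subspaces $L^2(B_i,\mu)$ are mutually orthogonal, the $u_i$ commute and satisfy $(u_i-1)(u_j-1)=0$ for $i\neq j$.

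Next I would isolate the role of the commutator subgroup. Since $T,T^2\in[[T]]$ and $T_{B_0}=T_A$, we have
\[
u_1u_0^{-1}=\pi\big(TT_AT^{-1}T_A^{-1}\big)=\pi([T,T_A]),\qquad u_2u_0^{-1}=\pi([T^2,T_A]),
\]
so both ratios lie in $C^*_\pi([[T]]')$. Hence it suffices to prove that $u_0=\pi(T_A)$ belongs to the $C^*$-subalgebra generated by $u_1u_0^{-1}$ and $u_2u_0^{-1}$.

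To do this I would pass to the universal model. By Lemma~\ref{univ} (with $n=3$, the generators relabelled $z_0,z_1,z_2$) and the relations just verified, there is a canonical surjection $C(\bigvee_{k=1}^3\mathbb{T})\to C^*(u_0,u_1,u_2)$ sending $z_i\mapsto u_i$. It therefore suffices to show, inside $C(\bigvee_{k=1}^3\mathbb{T})$, that $z_0\in C^*(z_0z_1^{-1},z_0z_2^{-1})$ (note $C^*(z_1z_0^{-1},z_2z_0^{-1})=C^*(z_0z_1^{-1},z_0z_2^{-1})$), and then apply the surjection. Writing $a:=z_0z_1^{-1}$ and $b:=z_0z_2^{-1}$, a short computation shows that $(1-a)(1-b)$ is supported on the circle carrying $z_0$ and vanishes at the wedge point, while suitable products such as $(a-b)(a-1)$ are supported on a single one of the other circles; and since $a,b$ are unitaries we also have $1\in C^*(a,b)$. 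Each such element, being an injective function on its circle that vanishes only at the wedge point, generates $C_0$ of that circle minus the wedge point, so together with $1$ they generate all of $C(\bigvee_{k=1}^3\mathbb{T})$, which in particular contains $z_0$.

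The \textbf{main obstacle} is precisely this last commutative computation, and it is where the hypothesis is used essentially: it is the \emph{wedge} structure supplied by Lemma~\ref{univ} (equivalently, the relation $(u_i-1)(u_j-1)=0$ coming from $3$-disjointness) that makes $z_0$ recoverable from the ratios. On the full torus $\mathbb{T}^3$ the analogous statement fails, since every word in $z_0z_1^{-1},z_0z_2^{-1}$ and their adjoints has total winding number zero across the three circles whereas $z_0$ does not, and this grading obstruction survives in the closed subalgebra; collapsing the torus to the wedge is exactly what destroys it. I would also double-check that two translates do not suffice (on $\bigvee_{k=1}^2\mathbb{T}$ the single ratio $z_0z_1^{-1}$ identifies the point $(x,0)$ with $(\bar x,1)$ and does not recover $z_0$), which confirms that $3$-disjointness, and not merely $2$-disjointness, is the correct hypothesis.
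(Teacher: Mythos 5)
Your proof is correct and follows essentially the same route as the paper: verify $(u_i-1)(u_j-1)=0$ from $3$-disjointness, invoke Lemma \ref{univ} to pass to $C(\bigvee_{k=1}^3\mathbb{T})$, and recover $z_0$ from the ratios $z_iz_j^*$ by a Stone--Weierstrass argument on the wedge. The only (harmless, and in fact slightly simpler) deviation is that you realize the ratios $\pi(T_{T^i(A)})\pi(T_A)^{-1}$ as images of the commutators $[T^i,T_A]$ via $T_{T^i(A)}=T^iT_AT^{-i}$, whereas the paper conjugates by the permutation elements $\sigma_A$ to the same effect.
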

\begin{proof}

  Given $\sigma\in S_3$, $x\in A$ and $0\leq i,j <3$, we have that $\sigma_A T_{T^i(A)}\sigma_A^{-1}(T^j(x))=T^j(x)$ if $j\neq\sigma(i)$ and 
  
  \begin{align*}\sigma_A T_{T^i(A)}\sigma_A^{-1}(T^{\sigma(i)}(x))&=\sigma_AT_{T^i(A)}(T^i(x))\\
  &=T^{\sigma(i)-i}T^{t_{T^i(A)}(T^i(x))}(T^i(x))\\
  &\stackrel{(*)}=T^{\sigma(i)-i}T^{t_{T^{\sigma(i)}(A)}(T^{\sigma(i)}(x))}(T^i(x))\\
  &=T^{t_{T^{\sigma(i)}(A)}(T^{\sigma(i)}(x))}(T^{\sigma(i)}(x))\\
  &=T_{T^{\sigma(i)}(A)}(T^{\sigma(i)}(x)),
  \end{align*}
 where the equality in (*) is due to \eqref{fac}. Hence, $\sigma_A T_{T^i(A)}\sigma_A^{-1}=T_{T^{\sigma(i)}(A)}$.
  
    In particular, for $0\leq i,j< 3$, we have that $T_{T^i(A)}(T_{T^j(A)})^{-1}\in [[T]]'$.

If $0\leq i\neq j<3$, then $T^i(A)$ and $T^j(A)$ are disjoint, hence $(\pi(T_{T^i(A)})-1)(\pi(T_{T^j(A)})-1)=0$.

 Then, by Lemma \ref{univ}, there is a $*$-homomorphism 

\begin{align*}
\varphi\colon C\left(\bigvee_{i=1}^3 \mathbb{T}\right)&\to C^*(\{\pi(T_{T^i(A)}):0\leq i < 3\}\\
z_i&\mapsto \pi(T_ {T^{i-1}(A)}).
\end{align*}

 Furthermore, by the Stone-Weierstrass theorem, $C(\bigvee_{i=1}^3 \mathbb{T})$ is generated by $$\{z_iz_j^*:1\leq i,j\leq 3\}.$$ Hence, $\pi(T_A)\in C^*_\pi([[T]]')$.

\end{proof}

\begin{theorem}\label{com}
Let $(X,T,\mu)$ be a Cantor minimal system and $\pi$ the Koopman representation associated to the action of $[[T]]$ on $(X,\mu)$. Then $C^*_\pi([[T]])=C^*_\pi([[T]]')$.
\end{theorem}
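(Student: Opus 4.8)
The inclusion $C^*_\pi([[T]]')\subseteq C^*_\pi([[T]])$ is immediate from $[[T]]'\subseteq[[T]]$, so the whole content is the reverse inclusion. Since $\{\pi(g):g\in[[T]]\}$ generates $C^*_\pi([[T]])$, it suffices to prove that $\pi(g)\in C^*_\pi([[T]]')$ for every $g\in[[T]]$. The plan is to reduce each such $g$ to the two situations already settled in Lemmas \ref{sym} and \ref{tempi}. I want to stress at the outset where the difficulty lies. First, the argument must keep the globally balanced element $\pi(g)$ together: one cannot hope to place the individual summands $1_{T^{n_i}(A_i)}\delta_{n_i}$ of \eqref{deta} in $C^*_\pi([[T]])$, since doing so would force $C^*_\pi([[T]])=C(X)\rtimes\mathbb{Z}$. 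Second, the hard direction is the ``index one'' one: because $[[T]]$ surjects onto $\mathbb{Z}$ with $[[T]]'$ in the kernel, the generator $T=T_X\in[[T]]$ is \emph{not} a product of commutators, so $\delta_1=\pi(T)$ cannot be produced as $\pi(c)$ for a single $c\in[[T]]'$; it has to emerge from the $C^*$-algebra generated by such elements.

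The key device I would use is a Kakutani--Rokhlin factorization of $T$. Fix a clopen Kakutani--Rokhlin partition with base $B=\sqcup_j B_j$, columns of heights $h_1,\dots,h_r$, and (refining if necessary, which is possible for a minimal Cantor system) arrange every $h_j\ge 4$; then $B$ is $3$-disjoint. Let $g_{KR}\in[[T]]$ shift each column up by one level and send each column top back to its own base, i.e. $g_{KR}=\prod_j (c_j)_{B_j}$ with $c_j$ the $h_j$-cycle. A direct check shows that $T$ and $g_{KR}$ agree off the column tops and that $T g_{KR}^{-1}$ is exactly the first return map $T_B$, so $T=T_B\,g_{KR}$ and
$$\delta_1=\pi(T)=\pi(T_B)\,\pi(g_{KR}).$$
By Lemma \ref{sym} each $\pi((c_j)_{B_j})\in C^*_\pi([[T]]')$, hence the finite commuting product $\pi(g_{KR})$ lies there, and by Lemma \ref{tempi} $\pi(T_B)\in C^*_\pi([[T]]')$. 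Thus $\delta_1\in C^*_\pi([[T]]')$. It is essential that this is an \emph{exact} identity and not an approximation: the discrepancy between $T$ and $g_{KR}$ at the column tops does not shrink in norm as the partition is refined, so a limiting argument would fail, and it is precisely the factor $T_B$ that absorbs this boundary term.

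With $\delta_1$ available, I would treat a general $g$ by the same mechanism. Choosing a Kakutani--Rokhlin partition compatible with $g$ (so that $g$ permutes the atoms $\{T^k(B_j)\}$) and with all heights $\ge 4$, I would decompose the induced motion of $g$ into permutations internal to a single column and transport between columns. The internal permutations are handled directly by Lemma \ref{sym} (which, crucially, covers \emph{all} of $S_n$ and so erases any parity obstruction inside a tall tower), while the inter-column transport is expressed through first return maps exactly as in the factorization above (Lemma \ref{tempi}) together with the now available powers of $\delta_1$. Equivalently, one shows that, modulo $[[T]]'$, every $g$ is a product of tower permutations and first return maps, whose $\pi$-images have all been placed in $C^*_\pi([[T]]')$. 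This yields $\pi(g)\in C^*_\pi([[T]]')$ for all $g$, hence $C^*_\pi([[T]])\subseteq C^*_\pi([[T]]')$ and equality.

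The main obstacle, as flagged, is the production of $\delta_1$: everything ``balanced'' is already reachable from the two lemmas, and the real point is that the non-commutator generator $T$ becomes reachable once it is split as a first return map times column cycles. The remaining effort is the bookkeeping of the last paragraph -- confirming that a general $g$ is compatible with a sufficiently fine Kakutani--Rokhlin partition and that its atom permutation genuinely factors into the within-column pieces (Lemma \ref{sym}) and the first-return pieces (Lemma \ref{tempi}), the latter requiring the Bratteli--Vershik recursion to route transport between columns whose bases are not $T$-translates of one another.
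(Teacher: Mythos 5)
Your first paragraph and the production of $\delta_1$ are correct and genuinely parallel to what the paper needs: the factorization $T=T_B\,g_{KR}$ over a Kakutani--Rokhlin partition with all heights at least $4$ does hold (the first return map to the base exactly absorbs the discrepancy at the column tops), each column cycle $(c_j)_{B_j}$ is a $\sigma_A$ covered by Lemma \ref{sym}, and $T_B$ is covered by Lemma \ref{tempi}, so $\delta_1\in C^*_\pi([[T]]')$. However, the treatment of a general $g\in[[T]]$ is where the real content lies, and there your argument has a genuine gap. The paper's proof consists of quoting a generation theorem (\cite[Theorem 4.7]{MR3241829}): for any $m$, the group $[[T]]$ is generated by the elements $T_A$ and $\sigma_A$ with $A$ clopen and $n$-disjoint, $n\geq m$; combined with Lemmas \ref{sym} and \ref{tempi} this finishes the proof in one line. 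Your second paragraph is, in effect, an assertion of that same generation statement (``modulo $[[T]]'$, every $g$ is a product of tower permutations and first return maps''), but you do not prove it, and it is not ``bookkeeping'' --- it is a theorem with a nontrivial proof in the literature.

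Concretely, the mechanism you propose for a general $g$ does not exist as described: there is in general no Kakutani--Rokhlin partition whose atoms are \emph{permuted} by $g$. Refining the partition so that $g$ equals a fixed power $T^{n}$ on each atom, an atom $T^k(B_j)$ near the top of its column is sent to $T^{k+n}(B_j)$ with $k+n\geq h_j$, and $T^{h_j}(B_j)$ is scattered across the bases of several columns; so $g$ maps that atom onto a union of pieces of distinct columns, not onto an atom. This wrap-around is exactly the combinatorial difficulty that the cited generation theorem resolves, and your appeal to ``the Bratteli--Vershik recursion to route transport between columns'' does not engage with it. Moreover, having all $\delta_n$ available does not help directly: as you yourself note, one cannot pass to the individual summands of \eqref{deta} without forcing $C^*_\pi([[T]])=C(X)\rtimes\mathbb{Z}$, so the extra unitaries $\delta_n$ cannot be combined with characteristic functions to reassemble $\pi(g)$. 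To complete your argument you would either have to prove the decomposition of an arbitrary $g$ into a product of $\sigma_A$'s and $T_A$'s (i.e., reprove \cite[Theorem 4.7]{MR3241829}) or cite it, at which point your explicit factorization of $T$ becomes a special case rather than the main step.
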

\begin{proof}
By \cite[Theorem 4.7]{MR3241829}, given $m\in\mathbb{N}$, [[T]] is generated by 
$$\bigcup_{n\geq m}\{T_A,\sigma_A:\sigma\in S_n,\text{$A\subset X$ is clopen and $n$-disjoint}\}.$$ 

By Lemmas \ref{sym} and \ref{tempi}, the result follows.

\end{proof}

Notice that $1_X\in L^2(X,\mu)$ is invariant under $\pi([[T]])$. Therefore, $\pi$ contains the trivial representation. 

\begin{lemma}\label{new}
Let $\rho\colon G\to B(H)$ be a unitary representation which weakly contains the trvial representation, and $\tau$ the associated character on $C^*_\rho(G)$. Then $\ker\tau=\overline{\Span}\{1-\rho(g):g\in G\}$.
\end{lemma}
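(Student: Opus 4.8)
The plan is to exploit that $\tau$ is a character, i.e.\ a one-dimensional $*$-representation with $\tau(\rho(g))=1$ for every $g\in G$; in particular $\ker\tau$ is a closed subspace and $\tau(1)=1$, where $1=\rho(e)$ genuinely lies in $C^*_\rho(G)$. Write $J:=\overline{\Span}\{1-\rho(g):g\in G\}$ for the right-hand side. One inclusion is immediate: since $\tau(\rho(g))=1$, each generator satisfies $\tau(1-\rho(g))=\tau(1)-\tau(\rho(g))=0$, and because $\ker\tau$ is a closed linear subspace it must contain the closed span $J$. Thus $J\subseteq\ker\tau$, and it remains to establish the reverse inclusion.

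For the converse I would introduce the bounded linear map $P\colon C^*_\rho(G)\to C^*_\rho(G)$ defined by $P(a):=a-\tau(a)1$. This is the canonical idempotent attached to the character: it restricts to the identity on $\ker\tau$ and has range exactly $\ker\tau$, since $\tau(P(a))=\tau(a)-\tau(a)\tau(1)=0$. The crucial point is that $P$ sends the $*$-subalgebra $\Span\{\rho(g):g\in G\}$ into $J$: for a finite combination $a=\sum_k c_k\rho(g_k)$ one computes
\[
P(a)=\sum_k c_k\rho(g_k)-\Big(\sum_k c_k\Big)1=-\sum_k c_k\big(1-\rho(g_k)\big)\in\Span\{1-\rho(g):g\in G\}.
\]
Since $P$ is continuous, $\Span\{\rho(g):g\in G\}$ is dense in $C^*_\rho(G)$, and $J$ is closed, it follows that $P(C^*_\rho(G))\subseteq J$. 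As $P$ maps onto $\ker\tau$, this yields $\ker\tau\subseteq J$, and combined with the first paragraph gives the claimed equality.

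I expect the only point needing a word of justification to be the density of $\Span\{\rho(g):g\in G\}$ in $C^*_\rho(G)$, but this is standard: the $\rho(g)$ are unitaries whose set is closed under multiplication ($\rho(g)\rho(h)=\rho(gh)$) and adjoints ($\rho(g)^*=\rho(g^{-1})$), so their linear span is already a $*$-subalgebra, whose closure is therefore all of $C^*_\rho(G)$. Everything else rests only on the continuity of $P$ and the closedness of $J$, both of which are built into the setup. An equivalent route, should one prefer to avoid $P$, is to note directly from the identity $\rho(g)=1-(1-\rho(g))$ that $C^*_\rho(G)=\mathbb{C}1+J$ — using that the sum of the finite-dimensional space $\mathbb{C}1$ and the closed space $J$ is again closed — and then to decompose any $a\in\ker\tau$ as $a=\lambda 1+j$ with $j\in J$, whereupon applying $\tau$ forces $\lambda=0$ and hence $a\in J$.
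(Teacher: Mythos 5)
Your proof is correct and follows essentially the same route as the paper's: the paper takes $d\in\ker\tau$, approximates it by $d'\in\Span\rho(G)$, and replaces $d'$ by $d'-\tau(d')1$, which is precisely your map $P$ applied on a dense subspace together with the same computation showing $P(\Span\{\rho(g):g\in G\})\subseteq\Span\{1-\rho(g):g\in G\}$. Your packaging via the continuous idempotent $P$ just replaces the paper's explicit $\epsilon/2$ estimate; there are no gaps.
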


\begin{proof}
Given $d\in\ker\tau$ and $\epsilon>0$, take $d'\in\Span\rho(G)$ such that $\|d-d'\|<\frac{\epsilon}{2}$. Then $\|d-(d'-\tau(d'))\|=\|(d-d')+\tau(d'-d)\|<\epsilon$. Furthermore, $d'-\tau(d')\in\ker\tau\cap\Span\rho(G)$.

Since $\ker\tau\cap\Span\rho(G)=\Span\{1-\rho(g):g\in G\}$, the result follows.

\end{proof}
\begin{theorem}\label{espera}
Let $\tau$ be the character on $C^*_\pi([[T]])$ coming from containment of the trivial representation. Then $\ker\tau$ is a hereditary $C^*$-subalgebra of $C(X)\rtimes\mathbb{Z}$.
\end{theorem}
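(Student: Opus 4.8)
The plan is to realize $\ker\tau$ as the concrete hereditary subalgebra of $C(X)\rtimes\mathbb{Z}$ that is cut out by the invariant vector $1_X$. First I would pin down $\tau$. Since $1_X$ is fixed by every $\pi(g)$, the line $\mathbb{C}1_X$ is invariant under $C^*_\pi([[T]])$, and the character coming from this containment of the trivial representation is the vector state $\tau(a)=\langle a1_X,1_X\rangle$, so that $a1_X=\tau(a)1_X$ for every $a\in C^*_\pi([[T]])$. Consequently
\[
\ker\tau=\{a\in C^*_\pi([[T]]):a1_X=0\}.
\]
Because $\ker\tau$ is the kernel of a $*$-homomorphism it is self-adjoint, so $a\in\ker\tau$ also yields $a^*1_X=0$. (Alternatively one reads all of this off the description $\ker\tau=\overline{\Span}\{1-\pi(g)\}$ from Lemma \ref{new}, since each $1-\pi(g)$ annihilates $1_X$.)

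Next I would introduce the candidate ambient algebra. Let $E\in B(L^2(X,\mu))$ be the rank-one projection onto $\mathbb{C}1_X$ and set
\[
J:=\{a\in C(X)\rtimes\mathbb{Z}: aE=Ea=0\}=\{a\in C(X)\rtimes\mathbb{Z}: a1_X=a^*1_X=0\}.
\]
I would check directly that $J$ is a hereditary $C^*$-subalgebra of $C(X)\rtimes\mathbb{Z}$: it is plainly a closed $*$-subalgebra, and if $0\le a\le b$ with $b\in J$ and $a\in C(X)\rtimes\mathbb{Z}$, then $\langle a1_X,1_X\rangle\le\langle b1_X,1_X\rangle=0$ forces $a1_X=0$, whence $a^*1_X=a1_X=0$, i.e. $a\in J$. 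By the first paragraph $\ker\tau\subseteq J$, and in fact $\ker\tau=J\cap C^*_\pi([[T]])$. Thus the theorem reduces to the single inclusion $J\subseteq C^*_\pi([[T]])$, equivalently to the identity $\ker\tau=J$. This identity is moreover essentially forced: for a $2$-disjoint clopen $A$ and the transposition $\sigma=(0\,1)$, the element $r:=\tfrac12(1-\pi(\sigma_A))$ is a projection in $\ker\tau$, and heredity of $\ker\tau$ already compels the whole corner $r\,(C(X)\rtimes\mathbb{Z})\,r\subseteq J$ to lie inside $\ker\tau$; so proving heredity and proving $\ker\tau=J$ are of the same difficulty.

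The main work, and the step I expect to be the genuine obstacle, is therefore to prove $J\subseteq C^*_\pi([[T]])$; by Lemma \ref{new} this means approximating an arbitrary $a\in J$ in norm by linear combinations of the elements $1-\pi(g)$. I would exploit the matrix-unit picture from Lemma \ref{sym}: for a clopen $n$-disjoint set $A$ the elements $1_{T^i(A)}\delta_{i-j}$ form a copy of $M_n$ inside $C(X)\rtimes\mathbb{Z}$ with unit $q_A=\sum_{i}1_{T^i(A)}$, and for $n\ge 4$ the elements $1-\pi(\sigma_A)=q_A-P_\sigma$ lie in $\ker\tau$ precisely because they annihilate $1_X$; by the representation-theoretic computation in the proof of Lemma \ref{sym} these $q_A-P_\sigma$ range over the standard $(n-1)$-dimensional block of this $M_n$. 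Using a Kakutani--Rokhlin partition with towers of height at least $4$, refined finely enough that the Fourier data of $a$ are locally constant on the tower levels, I would assemble $a$ up to $\varepsilon$ out of such standard blocks, one per tower. The two defining conditions $a1_X=0$ and $a^*1_X=0$ are exactly what is needed to land the approximant in the span of these $1_X$-annihilating combinations. The delicate points, where the argument is genuinely nontrivial, are to keep the tower approximation compatible with the off-diagonal terms $\delta_k$ for $k\ne 0$ (handled by the off-diagonal matrix units, and if necessary by the return-time elements $T_A$ of Lemma \ref{tempi}, which are available by Theorem \ref{com}) and with the self-adjointness constraint simultaneously.

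Finally, granting $J\subseteq C^*_\pi([[T]])$ I obtain $\ker\tau=J$, which is hereditary in $C(X)\rtimes\mathbb{Z}$ by the second paragraph, proving the theorem. I would also record that $\ker\tau\ne 0$ together with the simplicity of $C(X)\rtimes\mathbb{Z}$ makes $\ker\tau$ a \emph{full} hereditary subalgebra, which is the input needed for the subsequent conclusion that $\ker\tau$ is stably isomorphic to $C(X)\rtimes\mathbb{Z}$.
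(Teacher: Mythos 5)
Your setup is fine as far as it goes: identifying $\tau$ with the vector state $\langle\,\cdot\,1_X,1_X\rangle$, deducing $a1_X=a^*1_X=0$ for $a\in\ker\tau$, and observing that $J:=\{a\in C(X)\rtimes\mathbb{Z}:a1_X=a^*1_X=0\}$ is hereditary with $\ker\tau=J\cap C^*_\pi([[T]])$ are all correct. The problem is that you then reduce the theorem to the strictly stronger statement $J\subseteq C^*_\pi([[T]])$ (equivalently $\ker\tau=J$) and do not prove it: the Kakutani--Rokhlin approximation of an arbitrary element of $J$ by combinations of the $1-\pi(g)$, with simultaneous control of all Fourier modes and of both annihilation constraints, is only sketched, and you yourself flag it as the genuine obstacle. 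That is a gap, not a proof. Your argument that heredity of $\ker\tau$ is ``of the same difficulty'' as $\ker\tau=J$ is also unjustified: heredity only hands you the corners $r(C(X)\rtimes\mathbb{Z})r$ for projections $r\in\ker\tau$, and to exhaust $J$ by such corners you would need $\ker\tau$ to contain an approximate unit for $J$ --- again something to be proved, not assumed.

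The theorem is much cheaper than the identity $\ker\tau=J$, and the paper never establishes (or needs) that identity. It verifies the standard criterion that a closed $*$-subalgebra $D\subseteq A$ is hereditary iff $DAD\subseteq D$: by Lemma \ref{new}, $\ker\tau=\overline{\Span}\{1-\pi(g):g\in[[T]]\}$, and $C(X)\rtimes\mathbb{Z}=\overline{\Span}\{f\delta_n\}$, so it suffices to show $(\delta_0-\pi(g))\,f\,(\delta_0-\pi(h))\in\ker\tau$ for $f\in C(X)$. The engine is the identity, for $A$ clopen and $2$-disjoint, $(\delta_0-\delta_1)1_A(\delta_0-\delta_{-1})=\delta_0-\pi(\sigma_A)$ with $\sigma_A$ the transposition swapping $A$ and $T(A)$; telescoping in the exponents gives $(\delta_0-\delta_n)1_A(\delta_0-\delta_m)\in C^*_\pi([[T]])$, and refining to a basis of $2$-disjoint clopen sets on which $g$ and $h$ act as fixed powers of $T$ handles general $g,h$. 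Vanishing of $\tau$ on the result then follows from $c\,1_X=0$ for $c\in\ker\tau$, which you already established. I recommend restructuring your argument around this criterion; as written, the central inclusion $J\subseteq C^*_\pi([[T]])$ remains unproved and the proof is incomplete.
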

\begin{proof}
We are going to show that, for $a\in C(X)\rtimes\mathbb{Z}$ and $b,c\in\ker\tau$, it holds that $bac\in\ker\tau$. 

Given $A\subset X$ clopen and $2$-disjoint, notice that $(\delta_0-\delta_1)1_A(\delta_0-\delta_{-1})=\delta_0-(1_{(A\cup T(A))^\text{c}}\delta_0+1_{T(A)}\delta_1+1_A\delta_{-1})\in C^*_\pi([[T]])$.

By using telescoping sums, it follows that, for $n,m\in\mathbb{Z}$ and $A\subset X$ $2$-disjoint and clopen, $(\delta_0-\delta_n)1_A(\delta_0-\delta_m)\in C^*_\pi([[T]])$.

Given $g,h\in[[T]]$, take a basis $\mathcal{B}$ of 2-disjoint, clopen sets for the topology of $X$. Moreover, assume that, for each $A\in\mathcal{B}$, there is $n(A),m(A)\in\mathbb{Z}$ such that $g|_A=T^{n(A)}|_A$ and $h|_{h^{-1}(A)}=T^{m(A)}|_{h^{-1}(A)}$.  

Then 
\begin{align*}
(\delta_0-\pi(g))1_A(\delta_0-\pi(h))&=1_A-\delta_{n(A)}1_A-1_A\delta_{m(A)}+\delta_{n(A)}1_A\delta_{m(A)}\\
&=(\delta_0-\delta_{n(A)})1_A(\delta_0-\delta_{m(A)})\in C^*_\pi([[T]]).
\end{align*}

Since $C(X)=\overline{\Span}\{1_A:A\in\mathcal{B}\}$, we conclude that, for $g,h\in[[T]]$ and $f\in C(X)$, $(\delta_0-\pi(g))f(\delta_0-\pi(h))\in C^*_\pi([[T]]).$

By Lemma \ref{new} and the fact that $C(X)\rtimes\mathbb{Z}=\overline{\Span}\{f\delta_n:f\in C(X),n\in\mathbb{Z}\}$, we conclude that, for $b,c\in\ker\tau$ and $a\in C(X)\rtimes\mathbb{Z}$, $bac\in C^*_\pi([[T]])$. 

Since $\tau$ is a character, the result follows.
\end{proof}
\begin{corollary}
 Let $\tau$ be the character on $C^*_\pi([[T]])$ coming from containment of the trivial representation. Then $\ker\tau$ is stably isomorphic to $C(X)\rtimes\mathbb{Z}$. In particular, $C^*_\pi([[T]]')$ has real rank zero and $C^*([[T]]')$ is not AF.
\end{corollary}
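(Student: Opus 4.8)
The plan is to combine the two structural results already in hand, Theorem \ref{com} (which identifies $C^*_\pi([[T]])$ with $C^*_\pi([[T]]')$) and Theorem \ref{espera} (which realizes $\ker\tau$ as a hereditary $C^*$-subalgebra of $C(X)\rtimes\mathbb{Z}$), with three standard tools: Brown's stable isomorphism theorem, the Brown--Pedersen criterion for real rank zero of an extension, and a $K$-theoretic obstruction to being AF.

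First I would prove the stable isomorphism. Since $(X,T)$ is a Cantor minimal system, the crossed product $C(X)\rtimes\mathbb{Z}$ is simple, so \emph{every} non-zero hereditary $C^*$-subalgebra is automatically full (the closed two-sided ideal it generates must be the whole algebra). The subalgebra $\ker\tau$ is non-zero: for instance $1-\pi(g)\in\ker\tau$ is non-zero whenever $\pi(g)\neq 1$, and such $g\in[[T]]$ clearly exist. As $C(X)$ is separable, $C(X)\rtimes\mathbb{Z}$ is separable, hence $\sigma$-unital, so Brown's stable isomorphism theorem applies to the full hereditary subalgebra $\ker\tau$ and gives $\ker\tau\otimes\mathcal{K}\cong(C(X)\rtimes\mathbb{Z})\otimes\mathcal{K}$, which is the claimed stable isomorphism.

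Next, real rank zero. It is known (Putnam's analysis of Cantor minimal systems) that $C(X)\rtimes\mathbb{Z}$ is a simple, unital AT-algebra of real rank zero. Real rank zero is invariant under stable isomorphism, so by the previous step $\ker\tau$ has real rank zero. Now $C^*_\pi([[T]])$ sits in the extension $0\to\ker\tau\to C^*_\pi([[T]])\xrightarrow{\tau}\mathbb{C}\to 0$, in which both the ideal $\ker\tau$ and the quotient $\mathbb{C}$ have real rank zero. The two projections $0,1$ of $\mathbb{C}$ lift to the projections $0$ and $1$ of the unital algebra $C^*_\pi([[T]])$, so the Brown--Pedersen extension criterion forces $C^*_\pi([[T]])$ to have real rank zero; by Theorem \ref{com} the same holds for $C^*_\pi([[T]]')$. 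This is the step that needs the most care, since real rank zero does \emph{not} pass to extensions in general: one genuinely has to invoke lifting of projections from the quotient, which is available here only because the quotient is $\mathbb{C}$.

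Finally, that $C^*([[T]]')$ is not AF. By the Pimsner--Voiculescu sequence, $K_1(C(X))=0$, and minimality of $T$ forces the $T$-invariant functions in $K_0(C(X))=C(X,\mathbb{Z})$ to be the constants, so $K_1(C(X)\rtimes\mathbb{Z})\cong\mathbb{Z}$; in particular $C(X)\rtimes\mathbb{Z}$ is not AF. Being AF is a stable isomorphism invariant for separable algebras, so the stably isomorphic ideal $\ker\tau$ is not AF either. Since ideals of AF algebras are AF, $C^*_\pi([[T]]')=C^*_\pi([[T]])$ is not AF. The universal property of the full group $C^*$-algebra yields a surjection $C^*([[T]]')\twoheadrightarrow C^*_\pi([[T]]')$, and quotients of AF algebras are AF, so $C^*([[T]]')$ cannot be AF. The routine inputs (simplicity of the crossed product, $\ker\tau\neq 0$, the $K_1$ computation) are standard; the real obstacle, as noted above, is the transfer of real rank zero across the codimension-one extension.
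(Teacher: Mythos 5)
Your proof is correct and follows essentially the same route as the paper: Brown's stable isomorphism theorem applied to the full hereditary subalgebra $\ker\tau$ of the simple algebra $C(X)\rtimes\mathbb{Z}$, real rank zero of the crossed product, and the $K_1$ obstruction to being AF. The only difference is that you spell out the transfer of real rank zero across the codimension-one extension via the Brown--Pedersen criterion (one could alternatively note that $C^*_\pi([[T]])$ is just the unitization of $\ker\tau$, or that hereditary subalgebras of real rank zero algebras have real rank zero), a detail the paper leaves implicit.
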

\begin{proof}
By Theorem \ref{espera} and the fact that $C(X)\rtimes\mathbb{Z}$ is simple, it follows that $\ker\tau$ is a full, hereditary $C^*$-subalgebra of $C(X)\rtimes\mathbb{Z}$. Therefore, \cite[Theorem 2.8]{MR0454645} implies that $\ker\tau$ is stably isomorphic to $C(X)\rtimes\mathbb{Z}$.

Furthermore, by Theorem \ref{com}, $C^*_\pi([[T]])=C^*_\pi([[T]]')$. Since $C(X)\rtimes\mathbb{Z}$ has real rank zero (see, for instance, \cite{MR2134336} for a proof of this fact), and $K_1(C(X)\rtimes\mathbb{Z})\simeq\mathbb{Z}$, and $K_1(A)=0$ for any AF-algebra $A$, the conclusion follows.
\end{proof}

\section{Odometers}\label{odom}

We start this section by giving a description of $C^*_\pi([[T]])$ when $T$ is an odometer map.

Given $m\in\mathbb{N}$, let $\mathbb{Z}_m:=\mathbb{Z}/m\mathbb{Z}$.
\begin{example}\label{odo}

Let $(n_k)$ be a strictly increasing sequence of natural numbers such that, for every $k$, $n_k|n_{k+1}$. Let $\rho_k\colon \mathbb{Z}_{n_{k+1}}\to\mathbb{Z}_{n_k}$ be the surjective homomorphism such that $\rho_k(1)=1$, and define 
$$X:=\{(x_k)\in \prod_{k\in\mathbb{N}} \mathbb{Z}_{n_k}: \rho_{k}(x_{k+1})=x_k, \forall k\in\mathbb{N}\}.$$

Consider
\begin{align*}
T\colon X&\to X\\
(x_k)&\mapsto (x_k+1).
\end{align*}
 Then $(X,T)$ is a Cantor minimal system, the so called odometer of type $(n_k)$.

For $k\in\mathbb{N}$ and $l\in\mathbb{Z}_{n_k}$, let $U(k,l):=\{(x_m)\in X:x_k=l\}$. 

Using the notation from \eqref{sigma} and \eqref{time}, let, for $k\in\mathbb{N}$, $\Gamma_k:=\langle\{ T_{U(k,l)},\sigma_{U(k,0)}\in[[T]]:l\in\mathbb{Z}_{n_k},\sigma\in S_{n_k}\}\rangle$. As proven by Matui in \cite[Proposition 2.1]{MR3103094}, $\Gamma_k\subset \Gamma_{k+1}$, $\Gamma_k\simeq\mathbb{Z}^{n_k}\rtimes S_{n_k}$, and $\bigcup_k\Gamma_k=[[T]]$.

For $k\in\mathbb{N}$, let $A_k:=\overline{\Span}\{1_{U(k,l)}\delta_m:l\in\mathbb{Z}_{n_k},m\in\mathbb{Z}\}.$ Then $A_k\subset A_{k+1}$, and $C(X)\rtimes \mathbb{Z}=\overline{\bigcup_k A_k}$.

Fix $k\in\mathbb{N}$ and consider the isomorphism $\varphi_k\colon A_k\to C(\mathbb{T},M_{\mathbb{Z}_{n_k}}(\mathbb{C}))$, such that $\varphi_k(1_{U(k,l)})=e_{l,l}$, for $l\in\mathbb{Z}_{n_k}$, and, for $z\in\mathbb{T}$,  
\begin{align*}
 (\varphi_k(\delta_1)(z))_{i,j}:=\begin{cases}1, &\text{if } 0< i\leq n_k-1\text{ and }j=i-1 \\
z, & \text{if } i=0\text{ and }j=n_k-1\\
0, & \text{otherwise}. \end{cases}
\end{align*}

Let $\pi\colon[[T]]\to U(C(X)\rtimes\mathbb{Z})$ be the homomorphism coming from the Koopman representation and $B_k:=\{b\in M_{\mathbb{Z}_{n_k}}(\mathbb{C}):\forall i,j\in\mathbb{Z}_{n_k},\sum_r b_{i,r}=\sum_s b_{s,j}\}$. 

Then, for $\sigma\in S_{n_k}$, we have that $\varphi_k(\pi(\sigma_{U(k,0)}))=\sum e_{\sigma(i),i}$ and $$C^*(\{\varphi_k(\pi(\sigma_{U(k,0)})):\sigma\in S_{n_k}\})\simeq B_k.$$

  Furthermore, $\varphi_k(C^*(\pi(\{  T_{U(k,l)}:l\in\mathbb{Z}_{n_k}\})))\simeq C(\bigvee_{l\in\mathbb{Z}_{n_k}} \mathbb{T})$ and $\varphi_k(C^*_\pi(\Gamma_k))=\{f\in C(\mathbb{T},M_{\mathbb{Z}_{n_k}}(\mathbb{C})):f(1)\in B_k\}$.

\end{example}

In \cite{MR1759493}, Dykema and Rørdam gave examples of non-locally finite groups $G$ such that $C^*_{\mathrm{red}}(G)$ has real rank zero. As far as we are aware, there is no known example of non-locally finite group $G$ such that $C^*(G)$ has real rank zero. 

\begin{question}
Let $(X,T)$ be an odometer as in Example \ref{odo}. Does $C^*([[T]]')$ have real rank zero?
\end{question}

\begin{example}\label{sonho}
Let $(X,T)$ be an odometer of type $(n_k)$ as in Example \ref{odo}. Consider 

\begin{align*}
J\colon X&\to X\\
(x_k)&\mapsto (-x_k).
\end{align*}

Then $J$ is an involutive homeomorphism on $X$ such that $JTJ=T^{-1}$. Hence, $T$ and $J$ induce an action $\alpha$ of the infinite dihedral group $\mathbb{Z}\rtimes\mathbb{Z}_2$ on $X$. 
 We will use Matui's technique (\cite[Proposition 2.1]{MR3103094}) in order to compute $[[\alpha]]$.
 
For every $\gamma\in(\mathbb{Z}\rtimes\mathbb{Z}_2)\setminus\{e\}$, it holds that $\{x\in X:\alpha_\gamma(x)=x\}$ has empty interior (it consists of at most two elements). Hence, given $g\in[[\alpha]]$, there exists a unique continuous function $c_g\colon X\to\mathbb{Z}\rtimes\mathbb{Z}_2$ such that, for $x\in X$, $g(x)=\alpha_{c(g)}(x)$.  
 
 For $k\in\mathbb{N}$ and $l\in\mathbb{Z}_{n_k}$, let $U(k,l)$ be as in Example \ref{odo} and
 $$\Gamma_k:=\{g\in [[\alpha]]:\text{$c_g$ is constant on $U(k,l)$ for $l\in\mathbb{Z}_{n_k}$}\}.$$
 
 Define $J_{k,l}\in[[\alpha]]$ by
 \begin{align*}
J_{k,l}(x)=\begin{cases}T^{2l}J(x), &\text{if $x\in U(k,l)$ }\\
x, & \text{otherwise},\end{cases}\quad x\in X.
\end{align*}

Then $\Gamma_k=\langle \{T_{U(k,l)},J_{k,l},\sigma_{U(k,0)}:l\in\mathbb{Z}_{n_k},\sigma\in S_{n_k}\}\rangle$ and \begin{align}\label{nao}
\Gamma_k\simeq(\mathbb{Z}\rtimes\mathbb{Z}_2)^{n_k}\rtimes S_{n_k},\text{ } \Gamma_k\subset\Gamma_{k+1},\text{ and }\bigcup_k\Gamma_k=[[\alpha]].
\end{align}

Notice that the constant sequence $(0)\in X$ is a fixed point for $J$. Hence, \cite[Theorem 3.5]{MR1245825} implies that $C(X)\rtimes(\mathbb{Z}\rtimes\mathbb{Z}_2)$ is AF (see also \cite{MR962104}). Moreover, it follows from \eqref{nao} that the abelianization of $[[\alpha]]$ is locally finite.

 Therefore, the two obstructions that were used for ruling out the possibility of $C^*([[T]])$ and $C^*([[T]]')$ being AF do not hold for $C^*([[\alpha]])$. 
\end{example}
\begin{question}
Let $\alpha$ be as in Example \ref{sonho}. Is $C^*([[\alpha]])$ AF?.
\end{question}

\section*{Acknowledgements}
Part of this work was carried out while the author was attending the research program \textit{Classification of operator algebras: complexity, rigidity, and dynamics} at the Mittag-Leffler Institute. The author thanks the organizers of the program and the staff of the institute for the excellent work conditions provided.

The author also thanks J. Carrión, T. Giordano, K. Li and J. Rout for helpful conversations related to topics of this work.

\bibliographystyle{acm}
\bibliography{bibliografia}
\end{document}